\newtheorem{theorem}{Theorem}[section]
\newtheorem{lemma}[theorem]{Lemma}
\newtheorem{corollary}[theorem]{Corollary}
\theoremstyle{definition}
\newtheorem{definition}[theorem]{Definition}
\theoremstyle{remark}
\newtheorem{remark}[theorem]{Remark}
\numberwithin{equation}{section}
\begin{document}
\setcounter{page}{1}

\title[Besov continuity for global  operators: the critical case $p=q=\infty.$ ]{  Besov continuity for global  operators on compact Lie groups: the critical case $p=q=\infty.$}

\author[D. Cardona]{Duv\'an Cardona}
\address{
  Duv\'an Cardona:
  \endgraf
  Department of Mathematics  
  \endgraf
  Pontificia Universidad Javeriana.
  \endgraf
  Bogot\'a
  \endgraf
  Colombia
  \endgraf
  {\it E-mail address} {\rm duvanc306@gmail.com;
cardonaduvan@javeriana.edu.co}
  }
\subjclass[2010]{Primary 58J40, Secondary 35S05, 42B05.}

\keywords{ Pseudo-differential operator, Compact Lie groups, Ruzhansky-Turunen calculus, Global analysis}

\begin{abstract}
In this note, we study the mapping properties of  global pseudo-differential operators with symbols in Ruzhansky-Turunen classes  on Besov spaces $B^{s}_{\infty,\infty}(G).$  The considered classes satisfy Fefferman type conditions of limited regularity.
\end{abstract} \maketitle

\tableofcontents
\section{Introduction}
This note on the Besov boundedness   of pseudo-differential operators on compact Lie groups in $B^s_{\infty,\infty}$ is based in the matrix-valued quantization procedure developed by M. Ruzhansky and V. Turunen in \cite{Ruz}. 

The Besov spaces $B^s_{p,q}$ arose from attempts to unify the various definitions of
several fractional-order Sobolev spaces. By following the historical note of Grafakos \cite[p. 113]{GrafakosBook}, we recall that Taibleson studied the generalized H\"older-Lipchitz spaces $\Lambda^{s}_{p,q}$  on $\mathbb{R}^n,$  and these spaces were named after Besov spaces in honor to O. V.  Besov who obtained a trace theorem and important embedding properties for them (see  Besov \cite{Besov1,Besov2}).  Dyadic decompositions for Besov spaces on $\mathbb{R}^n$ were introduced by J. Peetre as well as other embedding properties (see Peetre \cite{Peetre1,Peetre2}). We will use the formulation of Besov spaces $B^s_{p,q}(G),$ trough of the representation theory of compact Lie groups $G$ introduced and consistently developed by  E. Nursultanov, M. Ruzhansky, and S. Tikhonov  in \cite{RuzBesov,RuzBesov2}. 

 The present paper is a continuation of a series of our previous papers \cite{Cardona,Cardona2,Cardona22,Cardona3}, where  were investigated the mapping properties of global operators (i.e., global pseudo-differential operators on compact Lie groups) on Besov spaces $B^s_{p,q},$ $-\infty<s<\infty,$ $1<p<\infty$ and $0<q\leq \infty.$ So, in this note we study the mapping properties for global operators in the case of Besov spaces $B^{s}_{\infty,\infty},$ $-\infty<s<\infty.$ 
 
The main tool in the proof of the  Besov boundedness results presented  in \cite{Cardona,Cardona2,Cardona22,Cardona3}, are the $L^p$-multipliers theorems proved in Ruzhansky and Wirth \cite{RW3,RW1}, Delgado and Ruzhansky \cite{DRlp}, Fischer \cite{Fischer2} and Akylzhanov and Ruzhansky \cite{RR3}. In general, such  $L^p$-estimates, $1<p<\infty,$  cannot be  extended to $L^{\infty}(G)$ and consequently we need to consider other techniques for the formulation of a boundedness result on $B^s_{\infty,\infty}$. Let us recall some  $L^p$ and  Besov estimates for global operators, in order to announce our main theorem. First of all, we recall some notions  on the global analysis of pseudo-differential operators. 

If $G$ is a compact Lie group and $\widehat{G}$ is its unitary dual, that is the set of  equivalence classes of continuous irreducible unitary representations of $G$, Ruzhansky-Turunen's approach associates to  every bounded linear operator $A$ on $C^\infty(G)$, a matrix valued symbol $\sigma_{A}(x,\xi)$ given by $\sigma_A(x,\xi):=\xi(x)^*(A\xi)(x)$, $x\in G$ and $\xi\in [\xi]\in \widehat{G}$. This  allows us to write the operator $A$ in terms of representations in $G$ as
\begin{equation}\label{operatordefinition'}
Af(x)=\sum_{[\xi]\in \widehat{G}}d_{\xi}\text{Tr}(\xi(x)\sigma_{A}(x,\xi)\widehat{f}(\xi)),
\end{equation}
for all $f\in C^{\infty}(G)$, where $\mathscr{F}(f):=\widehat{f}$ is the Fourier transform on the group $G$.

The H\"ormander classes $\Psi^{m}_{\rho,\delta}(G),$  $m\in\mathbb{R},$ $\rho>\max\{\delta,1-\delta\},$ where characterized in \cite{Ruz,Fischer} by the following condition: $A\in \Psi^{m}_{\rho,\delta}(G)$ if only if its matrix-valued symbol $\sigma_{A}(x,\xi)$ satisfies the inequalities
\begin{equation}\label{CLASSES'}
\Vert \partial_{x}^{\alpha}\Delta^{\beta}\sigma_{A}(x,\xi)\Vert_{op} \leq C_{\alpha,\beta} \langle \xi\rangle^{m-\rho|\beta|+\delta|\alpha|},
\end{equation}
for every $\alpha,\beta\in \mathbb{N}^n.$  The discrete differential operator $\Delta^\beta$ (called difference operator of first order) is the main tool in this theory (see \cite{Ruz,Fischer}).\\
 The $L^p-$mapping properties for global operators on compact Lie groups can be summarized as follows: if $G$ is a compact Lie group and  $n$ is its dimension,   $\varkappa$ is the less integer larger that $\frac{n}{2}$ and  $l:=[n/p]+1,$ under one of the following conditions 
\begin{itemize}
\item $\Vert \partial_{x}^\beta \mathbb{D}^{\alpha}_\xi\sigma_A(x,\xi)\Vert_{op}\leq C_{\alpha}\langle \xi \rangle^{-|\alpha|}, \text{ for all } \,|\alpha|\leq \varkappa,\,\,\,\,\,|\beta|\leq l\text{ y }[\xi]\in\widehat{G}$, (Ruzhansky and Wirth \cite{RW3,RW1}),
\item $\Vert \mathbb{D}_{\xi}^{\alpha}\partial_{x}^{\beta}\sigma_A(x,\xi) \Vert_{op}\leq C_{\alpha,\beta}\langle \xi\rangle^{-m-\rho|\alpha|+\delta|\beta|}, \,\,|\alpha|\leq \varkappa,|\beta|\leq l,\,\, m\geq \varkappa(1-\rho)|\frac{1}{p}-\frac{1}{2}|+\delta l,$ (Delgado and Ruzhansky \cite{DRlp}),
\item $\Vert \mathbb{D}_{\xi}^{\alpha}\partial_{x}^{\beta}\sigma_A(x,\xi) \Vert_{op}\leq C_{\alpha,\beta}\langle \xi\rangle^{-\nu-\rho|\alpha|+\delta|\beta|}, \,\,\alpha\in\mathbb{N}^{n},|\beta|\leq l,$  $0\leq \nu<\frac{n}{2}(1-\rho),$ $|\frac{1}{p}-\frac{1}{2}|\leq \frac{\nu}{n}(1-\rho)^{-1},$ (Delgado and Ruzhansky \cite{DRlp}),
\item  $ \Vert \sigma_{A}\Vert_{\Sigma_{s}}:=\sup_{  [\xi]\in\widehat{G}}[\Vert \sigma_A(\xi)\Vert_{op}+\Vert \sigma_A(\xi)\eta(r^{-2}\mathcal{L}_{G})\Vert_{\dot{H}^{s}(\widehat{G})}]<\infty,$ (Fischer\cite{Fischer2}),
\end{itemize}
the global operator $A\equiv T_a$ is bounded on  $L^{p}(G).$ On the other hand,  if $A:C^{\infty}(G)\rightarrow C^\infty(G)$ is a linear and bounded operator, then under any one of the following conditions
\begin{itemize}
\item $ \Vert  \mathbb{D}_{\xi}^{\alpha}\partial_{x}^\beta\sigma_{A}(x,\xi)\Vert_{op}\leq C_{\alpha}\langle \xi \rangle^{-|\alpha|}, \text{ for all } \,\,\,\,|\alpha|\leq \varkappa,\,\,\,\,\,|\beta|\leq l\text{ and  }[\xi]\in\widehat{G},$
\item $\Vert \mathbb{D}_{\xi}^{\alpha}\partial_{x}^{\beta}\sigma_A(x,\xi) \Vert_{op}\leq C_{\alpha,\beta}\langle \xi\rangle^{-\nu-\rho|\alpha|}, \,\,\alpha\in\mathbb{N}^{n},|\beta|\leq l,\,\,\,|\frac{1}{p}-\frac{1}{2}|\leq \frac{\nu}{n}(1-\rho)^{-1},\,\,0\leq \nu<\frac{n}{2}(1-\rho),$
\item $\Vert \mathbb{D}_{\xi}^{\alpha}\partial_{x}^{\beta}\sigma_A(x,\xi)\Vert_{op}\leq C_{\alpha,\beta}\langle \xi\rangle^{-m-\rho|\alpha|+\delta|\beta|}, \,\,|\alpha|\leq \varkappa,|\beta|\leq l,\,\,m\geq \varkappa(1-\rho)|\frac{1}{p}-\frac{1}{2}|+\delta l,$
\end{itemize}
the corresponding pseudo-differential operator $A$ extends to a bounded operator from $B^r_{p,q}(G)$ into $B^r_{p,q}(G)$ for all $r\in \mathbb{R},$ $1<p<\infty$ and $0<q\leq \infty,$ (see Cardona \cite{Cardona,Cardona2,Cardona22}). For $p,q=\infty$ we have the following theorem  which is our main result in this paper.

\begin{theorem}\label{MainT} Let $G$ be a compact Lie group of dimension $n$. Let $0<\rho\leq 1,$ $k:=[\frac{n}{2}]+1,$  and let $A:C^{\infty}(G)\rightarrow C^{\infty}(G)$ be a pseudo-differential operator with symbol $\sigma$ satisfying  
\begin{equation}\label{eqMaint}
\Vert \mathbb{D}_{\xi}^{\alpha}\sigma(x,\xi)\Vert_{op}\leq C_{\alpha}\langle \xi \rangle^{-\frac{n}{2}(1-\rho)-\rho|\alpha|}
\end{equation}
 for all $|\alpha| \leq k.$ Then  $A:B^s_{\infty,\infty}(G)\rightarrow B^s_{\infty,\infty}(G)$ extends to a bounded linear operator for all $-\infty<s<\infty.$ Moreover,\begin{equation}
\Vert A\Vert_{\mathscr{B}(B^s_{\infty,\infty})}\leq C  \sup\{C_{\alpha}: {|\alpha|\leq k}\}.\end{equation}
\end{theorem}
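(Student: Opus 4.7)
The plan is to combine the Littlewood--Paley characterization
$$\|f\|_{B^{s}_{\infty,\infty}(G)}\asymp\sup_{j\geq 0}2^{js}\|S_{j}f\|_{L^{\infty}(G)},\qquad S_j:=\psi_j(\mathcal{L}_G),$$
(with $\{\psi_j\}_{j\geq 0}$ a smooth dyadic partition of $[0,\infty)$) with a uniform $L^\infty$-estimate for the dyadic frequency-localized pieces of $A$. Setting $\widetilde S_j:=\sum_{|\ell-j|\leq 1}S_\ell$, so that $\widetilde S_j S_j=S_j$, a direct computation from \eqref{operatordefinition'} shows that the Ruzhansky--Turunen symbol of the composition $A\widetilde S_j$ is
$$\sigma_{A\widetilde S_j}(x,\xi)=\sigma(x,\xi)\,\widetilde\psi_j(\lambda_\xi),$$
which is supported in the dyadic shell $\langle\xi\rangle\sim 2^j$ and inherits the differential inequalities of $\sigma$ for $|\alpha|\leq k$.

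The decisive analytic step is to prove $\|A\widetilde S_j\|_{L^{\infty}(G)\to L^{\infty}(G)}\leq C\sup_{|\alpha|\leq k}C_\alpha$ uniformly in $j$. By the Schur test this reduces to the uniform bound $\sup_x\int_G|K_j(x,y)|\,dy\leq C$, with $K_j$ the right convolution kernel of $A\widetilde S_j$. I would introduce the weight $w_j(x,y):=(1+2^{j\rho}d(x,y))^{2k}$ and apply Cauchy--Schwarz,
\begin{equation*}
\int_G|K_j(x,y)|\,dy\leq\Bigl(\int_G w_j(x,y)|K_j(x,y)|^{2}\,dy\Bigr)^{1/2}\Bigl(\int_G w_j(x,y)^{-1}\,dy\Bigr)^{1/2}.
\end{equation*}
Since $2k>n$ for $k=[n/2]+1$, the second factor is $\lesssim 2^{-jn\rho/2}$. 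For the first factor, one expands the weight and invokes Plancherel's theorem on $G$:
$$\int_G d(x,y)^{2m}|K_j(x,y)|^{2}\,dy\lesssim\sum_{|\alpha|=m}\sum_{[\xi]\in\widehat G}d_\xi\,\|\mathbb{D}_\xi^{\alpha}\sigma_{A\widetilde S_j}(x,\xi)\|_{\mathrm{HS}}^{2}\lesssim 2^{jn\rho-2j\rho m},\qquad m\leq k,$$
using the polynomials associated with the difference operators $\mathbb{D}_\xi^\alpha$, the hypothesis \eqref{eqMaint}, and Weyl's law $\sum_{\langle\xi\rangle\sim 2^j}d_\xi^{2}\sim 2^{jn}$. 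After multiplication by the corresponding power $2^{j\rho l}$ from the weight, every summand contributes $\lesssim 2^{jn\rho}$, so the first factor is $\lesssim 2^{jn\rho/2}$ and the product is uniformly bounded, with constant $C\sup_{|\alpha|\leq k}C_\alpha$.

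To assemble the Besov estimate, I would decompose $f=\sum_{j\geq 0}S_jf$, use $AS_jf=A\widetilde S_j S_jf$ to obtain $\|AS_jf\|_{L^{\infty}}\leq C\cdot 2^{-js}\|f\|_{B^{s}_{\infty,\infty}}$, and then bound $\sup_i 2^{is}\|S_iAf\|_{L^{\infty}}$ from the decomposition $S_iAf=\sum_j S_iA\widetilde S_jS_jf$. This requires the off-diagonal decay $\|S_iA\widetilde S_j\|_{L^{\infty}\to L^{\infty}}\lesssim 2^{-N|i-j|}$ for arbitrarily large $N$, which is obtained by iterating the same kernel method with extra factors coming from the disjointness of the supports of $\psi_i$ and $\widetilde\psi_j$ for $|i-j|$ large, combined with additional differencing in the $\xi$ variable. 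Summing the resulting geometric series in $|i-j|$ (convergent once $N>|s|$) produces $\|A\|_{\mathscr{B}(B^{s}_{\infty,\infty})}\leq C\sup_{|\alpha|\leq k}C_\alpha$.

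The main obstacle is the uniform kernel estimate above, where only the seminorms $C_\alpha$ with $|\alpha|\leq k$ are available: the Cauchy--Schwarz weight exponent $2k$ must simultaneously be large enough ($2k>n$) so that $\int w_j^{-1}dy$ converges, and accessible to the hypothesis (which controls $\mathbb{D}_\xi^\alpha\sigma$ only for $|\alpha|\leq k$). The choice $k=[n/2]+1$ is the minimal integer satisfying both constraints, and reflects the critical nature of the order $-\tfrac{n}{2}(1-\rho)$ in the statement, precisely matching the Fefferman-type threshold beyond which $L^\infty$-boundedness would fail.
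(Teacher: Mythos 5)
Your uniform dyadic estimate $\Vert A\widetilde S_j\Vert_{L^\infty\to L^\infty}\leq C\sup_{|\alpha|\leq k}C_\alpha$ is sound: the weighted Cauchy--Schwarz/Plancherel argument with weight $(1+2^{j\rho}d(x,y))^{2k}$, $2k>n$, is essentially a re-proof of the lemma the paper imports as Lemma \ref{lemma} (Delgado--Ruzhansky, Lemma 4.11 of \cite{DRlp}), and the bookkeeping of exponents ($2^{jn\rho/2}\cdot 2^{-jn\rho/2}$) is correct. The problem is in the assembly step. You decompose $S_iAf=\sum_j S_iA\widetilde S_jS_jf$ and invoke the almost-orthogonality bound $\Vert S_iA\widetilde S_j\Vert_{L^\infty\to L^\infty}\lesssim 2^{-N|i-j|}$ for arbitrarily large $N$, attributing it to ``disjointness of the supports of $\psi_i$ and $\widetilde\psi_j$'' plus ``additional differencing in the $\xi$ variable.'' This is where the argument breaks. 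The operator $A\widetilde S_j$ localizes the \emph{input} frequency to $\langle\xi\rangle\sim 2^j$, but the \emph{output} frequency content of $x\mapsto\sum_\xi d_\xi\operatorname{Tr}[\xi(x)\sigma(x,\xi)\widehat f(\xi)]$ is governed by the modulus of continuity of $x\mapsto\sigma(x,\xi)$, and the hypothesis \eqref{eqMaint} imposes no regularity in $x$ whatsoever. Consequently $S_iA\widetilde S_j$ need not be small when $|i-j|$ is large: differencing in $\xi$ yields spatial decay of the kernel away from the identity, not orthogonality between the output shell $2^i$ and the input shell $2^j$. Moreover, only $k$ difference orders are available, so even the mechanism you describe could not produce decay of arbitrary order $N>|s|$. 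The geometric series in $|i-j|$ therefore cannot be summed, and the Besov bound for general $s$ does not follow from your two ingredients.

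The paper's route avoids this obstruction entirely, and the contrast is instructive. It first proves the theorem for Fourier multipliers $\sigma(D_x)$, for which $\sigma(D_x)$ commutes \emph{exactly} with each $\psi_l(\mathcal{R})$, so $\psi_l(\mathcal{R})\sigma(D_x)f=\sigma_l(D_x)\psi_l(\mathcal{R})f$ with $\sigma_l$ supported in a single shell and there are no off-diagonal terms at all; the dyadic lemma then gives the result in one line. The $x$-dependence is then handled not by almost-orthogonality but by freezing the spatial variable: writing $Af(x)=\sigma_x(D_x)f(x)$ and majorizing by $\sup_{z\in G}$ of the frozen multipliers, which requires only measurable boundedness in $x$ --- exactly what \eqref{eqMaint} provides. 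If you want to keep your Littlewood--Paley assembly, you must either add a hypothesis of $x$-regularity sufficient to control the frequency spreading (and then $N$ is limited by the number of available $x$-derivatives, restricting the range of $s$), or replace the almost-orthogonality step by the frozen-symbol reduction.
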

 Theorem \ref{MainT} implies the following result.
 \begin{corollary}\label{MainT'} Let $G$ be a compact Lie group of dimension $n$. Let $0<\rho\leq 1,$ $0\leq \delta\leq 1,$ $\ell\in\mathbb{N},$ $k:=[\frac{n}{2}]+1,$  and let $A:C^{\infty}(G)\rightarrow C^{\infty}(G)$ be a pseudo-differential operator with symbol $\sigma$ satisfying  
\begin{equation}\label{eqMaint'}
\Vert \partial_x^\beta\mathbb{D}_{\xi}^{\alpha}\sigma(x,\xi)\Vert_{op}\leq C_{\alpha}\langle \xi \rangle^{-m-\rho|\alpha|+\delta|\beta|}
\end{equation}
 for all $|\alpha| \leq k,$ $|\beta|\leq \ell.$ Then  $A:B^s_{\infty,\infty}(G)\rightarrow B^s_{\infty,\infty}(G)$ extends to a bounded linear operator for all $-\infty<s<\infty$ provided that $m\geq \delta \ell+\frac{n}{2}(1-\rho).$
\end{corollary}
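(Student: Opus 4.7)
My plan is to deduce Corollary \ref{MainT'} directly from Theorem \ref{MainT}. Inspection of the two hypotheses suggests that the $x$-derivative control in \eqref{eqMaint'} plays no essential role: only the $\beta=0$ slice of \eqref{eqMaint'} is needed, and the constraint $m\ge \delta\ell+\frac{n}{2}(1-\rho)$ is designed precisely so that this slice recovers the decay rate imposed in \eqref{eqMaint}.

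Concretely, I would first specialise \eqref{eqMaint'} to $\beta=0$, obtaining
\[
\Vert \mathbb{D}_{\xi}^{\alpha}\sigma(x,\xi)\Vert_{op}\leq C_{\alpha}\langle \xi \rangle^{-m-\rho|\alpha|},\qquad |\alpha|\leq k.
\]
Since $\delta\ge 0$ and $\ell\in\mathbb{N}$, the assumption $m\ge \delta\ell+\frac{n}{2}(1-\rho)$ yields $m\ge \frac{n}{2}(1-\rho)$. Using $\langle \xi\rangle\ge 1$ for every $[\xi]\in\widehat{G}$ and the monotonicity of $t\mapsto \langle\xi\rangle^{-t}$, I would then conclude
\[
\langle \xi\rangle^{-m-\rho|\alpha|}\le \langle \xi\rangle^{-\frac{n}{2}(1-\rho)-\rho|\alpha|},
\]
so that hypothesis \eqref{eqMaint} of Theorem \ref{MainT} holds with the same constants $C_\alpha$. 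Applying Theorem \ref{MainT} delivers the boundedness $A:B^s_{\infty,\infty}(G)\to B^s_{\infty,\infty}(G)$ for every $s\in\mathbb{R}$, together with the operator-norm estimate $\Vert A\Vert_{\mathscr{B}(B^s_{\infty,\infty})}\le C\sup\{C_\alpha:|\alpha|\le k\}$.

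I expect no real obstacle: the statement is essentially a reformulation of Theorem \ref{MainT} in the standard Fefferman-type format $\langle \xi\rangle^{-m-\rho|\alpha|+\delta|\beta|}$ employed in the $L^p$- and Besov-boundedness theorems recalled in the introduction. The role of the extra parameters $(\delta,\ell)$ is entirely absorbed into the inequality $m\ge \delta\ell+\frac{n}{2}(1-\rho)$, and the hypothesis on the $x$-derivatives of $\sigma$ is retained only to match the format of those earlier results rather than because it is actively used in the argument.
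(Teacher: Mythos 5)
Your proposal is correct and coincides with the paper's own argument: the paper likewise reduces Corollary \ref{MainT'} to Theorem \ref{MainT} by observing that $m\geq \delta\ell+\frac{n}{2}(1-\rho)$ and $\langle\xi\rangle\geq 1$ give $\langle\xi\rangle^{-m-\rho|\alpha|+\delta|\beta|}\leq\langle\xi\rangle^{-\frac{n}{2}(1-\rho)-\rho|\alpha|}$, so the $\beta=0$ hypothesis of Theorem \ref{MainT} holds. Your additional remark that the $x$-derivative bounds are not actually used is accurate and consistent with the paper's one-line proof.
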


 Besov spaces on graded Lie groups, as well as the action of Fourier multipliers and spectral multipliers on these spaces can be found in Cardona and Ruzhansky \cite{Cardona33,Cardona333}. We refer the reader to the references \cite{DuSpe,Sha} for boundedness properties of pseudo-differential operators in Besov spaces on $\mathbb{R}^n.$
 This paper is organized as follows. In the next section we present some basics on the calculus of global operators. Finally, in Section \ref{proof}  we prove our Besov estimates.

\section{Pseudo-differential operators on compact Lie groups}\label{Preliminaries}
\subsection{Fourier analysis and Sobolev spaces on compact Lie groups } In this section we will introduce some preliminaries on pseudo-differential operators on compact Lie groups and some of its properties on $L^p$-spaces. There are two notions of pseudo-differential operators on compact Lie groups. The first notion in the case of general manifolds (based on the idea of {\em local symbols} as in H\"ormander \cite{Ho-1}) and, in a much more recent context, the one of global pseudo-differential operators on compact Lie groups as defined by Ruzhansky and Turunen \cite{Ruz}. We adopt this last notion for our work, because we will use a description of the Besov spaces $B^s_\infty,\infty$ trough representation theory. We will always equip a compact Lie group with the Haar measure $\mu_{G}.$ For simplicity, we will write  $L^\infty(G)$ for $L^\infty(G,\mu_{G})$.  The following assumptions are respectively the Fourier transform and the Fourier inversion formula for smooth functions,
 $$  \widehat{\varphi}(\xi)=\int_{G}\varphi(x)\xi(x)^*dx,\,\,\,\,\,\,\,\,\,\,\,\,\,\,\,\, \varphi(x)=\sum_{[\xi]\in \widehat{G}}d_{\xi}\text{Tr}(\xi(x)\widehat{\varphi}(\xi)) .$$
The Peter-Weyl Theorem on $G$ implies  the Plancherel identity on $L^2(G),$
$$ \Vert \varphi \Vert_{L^2(G)}= \left(\sum_{[\xi]\in \widehat{G}}d_{\xi}\text{Tr}(\widehat{\varphi}(\xi)\widehat{\varphi}(\xi)^*) \right)^{\frac{1}{2}}=\Vert  \widehat{\varphi}\Vert_{ L^2(\widehat{G} ) } .$$
\noindent Here  $$\Vert A \Vert^2_{HS}=\text{Tr}(AA^*),$$ denotes the Hilbert-Schmidt norm of matrices. Now, we introduce global pseudo-differential operstors in the sense of Ruzhansky and Turunen. Any continuous linear operator $A$ on $G$ mapping $C^{\infty}(G)$ into $\mathcal{D}'(G)$ gives rise to a {\em matrix-valued global (or full) symbol} $\sigma_{A}(x,\xi)\in \mathbb{C}^{d_\xi \times d_\xi}$ given by
\begin{equation}
\sigma_A(x,\xi)=\xi(x)^{*}(A\xi)(x),
\end{equation}
which can be understood from the distributional viewpoint. Then it can be shown that the operator $A$ can be expressed in terms of such a symbol as \cite{Ruz}
\begin{equation}Af(x)=\sum_{[\xi]\in \widehat{G}}d_{\xi}\text{Tr}[\xi(x)\sigma_A(x,\xi)\widehat{f}(\xi)]. 
\end{equation}
In this paper we use the notation $\text{Op}(\sigma_A)=A.$ $L^p(\widehat{G})$ spaces on the unitary dual can be well defined.   If $p=2,$ $L^2(\widehat{G})$ is defined by the norm  $$\Vert \Gamma \Vert^2_{L^2(\widehat{G})}=\sum_{[\xi]\in\widehat{G}}d_{\xi}\Vert \Gamma (\xi)\Vert^2_{HS}.$$ 
Now, we want to introduce Sobolev spaces and, for this, we give some basic tools. \noindent Let $\xi\in \textnormal{Rep}(G):=\cup \widehat{G}=\{\xi:[\xi]\in\widehat{G}\},$ if $x\in G$ is fixed, $\xi(x):H_{\xi}\rightarrow H_{\xi}$ is an unitary operator and $d_{\xi}:=\dim H_{\xi} <\infty.$ There exists a non-negative real number $\lambda_{[\xi]}$ depending only on the equivalence class $[\xi]\in \hat{G},$ but not on the representation $\xi,$ such that $-\mathcal{L}_{G}\xi(x)=\lambda_{[\xi]}\xi(x);$ here $\mathcal{L}_{G}$ is the Laplacian on the group $G$ (in this case, defined as the Casimir element on $G$). Let  $\langle \xi\rangle$ denote the function $\langle \xi \rangle=(1+\lambda_{[\xi]})^{\frac{1}{2}}$.  
\begin{definition}\label{sov} For every $s\in\mathbb{R},$ the {\em Sobolev space} $H^s(G)$ on the Lie group $G$ is  defined by the condition: $f\in H^s(G)$ if only if $\langle \xi \rangle^s\widehat{f}\in L^{2}(\widehat{G})$. The Sobolev space $H^{s}(G)$ is a Hilbert space endowed with the inner product $\langle f,g\rangle_{s}=\langle \Lambda_{s}f, \Lambda_{s}g\rangle_{L^{2}(G)}$, where, for every $r\in\mathbb{R}$, $\Lambda_{s}:H^r\rightarrow H^{r-s}$ is the bounded pseudo-differential operator with symbol $\langle \xi\rangle^{s}I_{\xi}.$ In $L^p$ spaces,  the $p$-Sobolev space of order $s,$ $H^{s,p}(G),$ is defined by functions satisfying
\begin{equation}
\Vert f \Vert_{H^{s,p}(G)}:=\Vert \Lambda_sf\Vert_{L^p(G)}<\infty.
\end{equation}
\end{definition}
\subsection{Differential and difference operators} In order to classify symbols by its regularity we present the usual definition of differential operators and the difference operators used introduced in \cite{Ruz}.
\begin{definition}\label{differenceoperators} Let $(Y_{j})_{j=1}^{\text{dim}(G)}$ be a basis for the Lie algebra $\mathfrak{g}$ of $G$, and let $\partial_{j}$ be the left-invariant vector fields corresponding to $Y_j$. We define the differential operator associated to such a basis by $D_{Y_j} = \partial_{j}$ and, for every $\alpha\in\mathbb{N}^n$, the {\em differential operator} $\partial^{\alpha}_{x}$ is the one given by $\partial_x^{\alpha}=\partial_{1}^{\alpha_1}\cdots \partial_{n}^{\alpha_n}$. Now, if $\xi_{0}$ is a fixed irreducible  representation, the matrix-valued {\em difference operator} is the given by $\mathbb{D}_{\xi_0}=(\mathbb{D}_{\xi_0,i,j})_{i,j=1}^{d_{\xi_0}}=\xi_{0}(\cdot)-I_{d_{\xi_0}}$. If the representation is fixed we omit the index $\xi_0$ so that, from a sequence $\mathbb{D}_1=\mathbb{D}_{\xi_0,j_1,i_1},\cdots, \mathbb{D}_n=\mathbb{D}_{\xi_0,j_n,i_n}$ of operators of this type we define $\mathbb{D}^{\alpha}_{\xi}=\mathbb{D}_{1}^{\alpha_1}\cdots \mathbb{D}^{\alpha_n}_n$, where $\alpha\in\mathbb{N}^n$.
\end{definition}
\subsection{Besov spaces} We introduce the Besov spaces on compact Lie groups using the  Fourier transform on the group $G$ as follow.
\begin{definition}\label{besovspaces} Let $r\in\mathbb{R},$ $0\leq q<\infty$ and $0<p\leq \infty.$ If $f$ is a measurable function on $G,$ we say that $f\in B^r_{p,q}(G)$ if $f$ satisfies
\begin{equation}\label{n1}\Vert f \Vert_{B^r_{p,q}}:=\left( \sum_{m=0}^{\infty} 2^{mrq}\Vert \sum_{2^m\leq \langle \xi\rangle< 2^{m+1}}  d_{\xi}\text{Tr}[\xi(x)\widehat{f}(\xi)]\Vert^q_{L^p(G)}\right)^{\frac{1}{q}}<\infty.
\end{equation}
If $q=\infty,$ $B^r_{p,\infty}(G)$ consists of those functions $f$ satisfying
\begin{equation}\label{n2}\Vert f \Vert_{B^r_{p,\infty}}:=\sup_{m\in\mathbb{N}} 2^{mr}\Vert \sum_{2^m\leq \langle \xi\rangle < 2^{m+1}}  d_{\xi}\text{Tr}[\xi(x)\widehat{f}(\xi)]\Vert_{L^p(G)}<\infty.
\end{equation}
\end{definition}
If we denote by $\textnormal{Op}(\chi_{m})$ the Fourier multiplier associated to the symbol
$$ \chi_{m}(\eta)=1_{\{[\xi]:2^m\leq \langle \xi\rangle< 2^{m+1}\}}(\eta), $$ we also write,
\begin{equation}
\Vert f\Vert_{B^r_{p,q}}=\Vert \{2^{mr} \Vert \textnormal{Op}(\chi_{m})f \Vert_{L^p(G)}\}_{m=0}^{\infty} \Vert_{l^q(\mathbb{N})},\,\,0<p,q\leq \infty,\,r\in\mathbb{R}.
\end{equation}
\begin{remark}
For every $s\in\mathbb{R},$ $H^{s,2}(G)=H^{s}(G)=B^{s}_{2,2}(G).$ Besov spaces according to Definition \eqref{besovspaces} were introduced in \cite{RuzBesov} on compact homogeneous manifolds where, in particular, the authors obtained its embedding properties. On compact Lie groups such spaces were characterized, via representation theory, in \cite{RuzBesov2}.
\end{remark}

\section{Global operators on $B^s_{\infty,\infty}(G)$-spaces}\label{proof}
In this section we prove our Besov estimate for global pseudo-differential operators. Our starting point is the following lemma which is slight variation of one due to J. Delgado and M. Ruzhansky (see Lemma 4.11 of \cite{DRlp}) and whose proof is verbatim to Delgado-Ruzhansky's proof.

\begin{lemma}\label{lemma} Let $G$ be a compact Lie group of dimension $n$. Let $0<\rho\leq 1,$ $k:=[\frac{n}{2}]+1,$  and let $A:C^{\infty}(G)\rightarrow C^{\infty}(G)$ be a pseudo-differential operator with symbol $\sigma$ satisfying  
\begin{equation}
\Vert \mathbb{D}_{\xi}^{\alpha}\sigma(x,\xi)\Vert_{op}\leq C_{\alpha}\langle \xi \rangle^{-\frac{n}{2}(1-\rho)-\rho|\alpha|}
\end{equation}
 for all $|\alpha| \leq k.$ 
 Let us assume that $\sigma$ is supported in  $\{\xi:R\leq \langle\xi\rangle\leq 2R\}$ for some $R>0.$ Then  $A:L^{\infty}(G)\rightarrow L^{\infty}(G)$ extends to a bounded linear operator with norm operator independent of $R.$ Moreover,\begin{equation}
\Vert A\Vert_{\mathscr{B}(L^{\infty})}\leq C  \sup\{C_{\alpha}: {|\alpha|\leq k}\}.\end{equation}
\end{lemma}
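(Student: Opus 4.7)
The plan is to pass to the right-convolution kernel representation and bound the uniform $L^{1}$--norm of that kernel, independently of $R$, by splitting the integral into a piece close to the identity (handled by Cauchy--Schwarz + Plancherel) and a piece far from the identity (handled by difference operators, which correspond to multiplication of the kernel by a function vanishing to order $|\alpha|$ at the identity). Since $A$ is a right Fourier multiplier plus $x$-dependence but the estimate is uniform in $x$, I treat $x$ as a parameter throughout.

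First I would write
\[
Af(x)=\int_{G}k_{x}(y^{-1}x)f(y)\,dy,\qquad
k_{x}(z)=\sum_{[\xi]\in\widehat G} d_{\xi}\operatorname{Tr}\bigl[\xi(z)\sigma(x,\xi)\bigr],
\]
so that $\|Af\|_{L^{\infty}}\le \|f\|_{L^{\infty}}\sup_{x}\|k_{x}\|_{L^{1}(G)}$, reducing the problem to a uniform bound on $\|k_{x}\|_{L^{1}}$. Next I fix a smooth symmetric cutoff $\psi$ on $G$ with $\psi=1$ near the identity, rescaled to the ball $\{|z|\lesssim R^{-\rho}\}$, and split $k_{x}=\psi k_{x}+(1-\psi)k_{x}$.

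For the near piece, the Plancherel identity on $\widehat G$, the estimate $\|\sigma(x,\xi)\|_{HS}\le d_{\xi}^{1/2}\|\sigma(x,\xi)\|_{op}\lesssim d_{\xi}^{1/2}R^{-\frac{n}{2}(1-\rho)}$, and the Weyl-type count $\sum_{R\le\langle\xi\rangle\le 2R}d_{\xi}^{2}\asymp R^{n}$ give
\[
\|k_{x}\|_{L^{2}(G)}^{2}\lesssim \sum_{R\le\langle\xi\rangle\le 2R}d_{\xi}^{2}R^{-n(1-\rho)}\lesssim R^{n\rho}.
\]
Since $\operatorname{supp}\psi$ has measure $\lesssim R^{-n\rho}$, Cauchy--Schwarz gives $\|\psi k_{x}\|_{L^{1}}\lesssim R^{-n\rho/2}\cdot R^{n\rho/2}=1$, uniformly in $R$ and $x$. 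For the far piece, I would use the fundamental identity of the Ruzhansky--Turunen calculus: for a suitable choice of first order difference operator, there is a symbolic function $q_{\alpha}$ with $|q_{\alpha}(z)|\asymp|z|^{|\alpha|}$ near $e$ and such that
\[
q_{\alpha}(z)k_{x}(z)=\sum_{[\xi]}d_{\xi}\operatorname{Tr}\bigl[\xi(z)\mathbb D_{\xi}^{\alpha}\sigma(x,\xi)\bigr].
\]
Taking $|\alpha|=k=[n/2]+1$ and using the hypothesis yields $\|q_{\alpha}k_{x}\|_{L^{2}}^{2}\lesssim R^{n\rho-2\rho k}$ by the same counting as above; combining with Cauchy--Schwarz and the tail integral $\int_{|z|>R^{-\rho}}|z|^{-2k}\,dz\lesssim R^{\rho(2k-n)}$ (convergent because $2k>n$), one obtains $\|(1-\psi)k_{x}\|_{L^{1}}\lesssim 1$, again uniformly in $R$ and $x$.

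The main obstacle, and the reason the statement is a ``slight variation'' of Delgado--Ruzhansky, is the interplay between the regularity index $k=[n/2]+1$ and the gain $-\frac{n}{2}(1-\rho)$ in the symbol estimate: the two must balance exactly so that both pieces of the $L^{1}$ kernel estimate are $R$-independent. Concretely, one needs to verify (i) that a difference operator $\mathbb D_{\xi}^{\alpha}$ really corresponds on the kernel side to multiplication by a function comparable to $|z|^{|\alpha|}$ near $e$, which requires picking the right set of differences attached to a strongly admissible collection of representations in the sense of \cite{Ruz}, and (ii) that the Weyl dimension estimate $\sum_{R\le\langle\xi\rangle\le 2R}d_{\xi}^{2}\asymp R^{n}$ has the right power of $R$ for the cancellation to work. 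Once these two facts are in place, summing the two pieces gives $\sup_{x}\|k_{x}\|_{L^{1}}\le C\sup_{|\alpha|\le k}C_{\alpha}$, which is the claimed bound on $\|A\|_{\mathscr B(L^{\infty})}$.
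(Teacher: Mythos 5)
Your argument is correct and is essentially the proof the paper relies on: the paper does not write out a proof but defers verbatim to Lemma 4.11 of Delgado--Ruzhansky \cite{DRlp}, whose argument is exactly your kernel estimate $\Vert A\Vert_{\mathscr{B}(L^\infty)}\le \sup_x\Vert k_x\Vert_{L^1}$ with the near/far splitting at scale $R^{-\rho}$, Cauchy--Schwarz plus Plancherel and the Weyl count $\sum_{R\le\langle\xi\rangle\le 2R}d_\xi^2\lesssim R^n$ for the near piece, and strongly admissible difference operators (with $\sum_{|\alpha|=k}|q_\alpha(z)|^2\gtrsim |z|^{2k}$) for the tail. The exponents balance exactly as you computed, so no further comparison is needed.
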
 So, we are ready for the proof of our main result.
\begin{proof}[Proof of Theorem \ref{MainT}] Our proof consists of two steps. In the first one, we prove the statement of the theorem for Fourier multipliers, i.e., pseudo-differential operators depending only on the Fourier variables $\xi.$ Later,  we extend the result for general global  operators.\\

\noindent{{\textit{Step 1.}}} Let us consider $\mathcal{R}:=(I-\mathcal{L}_G)^{\frac{1}{2}}$ where $\mathcal{L}_{G}$ is the Laplacian on $G.$ Let us denote by $\psi$  the characteristic function of the interval $I=[1/2,1].$ Denote by $\psi_{l}$ the function $\psi_{l}(t)=\psi(2^{-l}t),$ $t\in \mathbb{R}.$ 
We will use the following characterization for $B^s_{\infty,\infty}(G):$  $f\in B^s_{\infty,\infty}(G),$ if  and only if, 
\begin{equation}
\Vert f \Vert_{B^s_{\infty,\infty}(G)}:=\sup_{l\geq 0}2^{ls}\Vert \psi_l(\mathcal{R})f \Vert_{L^\infty(G)}
\end{equation}
where $\psi_l(\mathcal{R})$ is defined by the functional calculus associated to the self-adjoint operator $\mathcal{R}.$ If $A\equiv\sigma(D_x  )$ has a symbol depending only on the Fourier variables $\xi$, then 
\begin{equation}
\Vert    \sigma(D_x)f\Vert_{B^s_{\infty,\infty}(G)}:=\sup_{l\geq 0}2^{ls}\Vert \psi_l(\mathcal{R})\sigma(D_x)f \Vert_{L^\infty(G)}.
\end{equation}
Taking into account that the operator $\sigma(D_x)$ commutes with $\psi_l(\mathcal{R})$ for every $l,$ that 
\begin{equation}
\psi_l(\mathcal{R})\sigma(D_x)= \sigma(D_x)\psi_l(\mathcal{R})= \sigma_{l}(D_x)\psi_l(\mathcal{R})
\end{equation}
where $\sigma_l(D_x)$ is the pseudo-differential operator with matrix-valued symbol $$\sigma_{l}(\xi)=\sigma(\xi) \cdot 1_{\{\xi:2^{l-1}\leq \langle \xi\rangle\leq 2^{l+1}   \}},$$
and that $\sigma_{l}(D_x)$ has a symbol supported in $\{\xi:2^{l-1}\leq \langle \xi\rangle\leq 2^{l+1}   \},$ by Lemma \ref{lemma} we deduce that $\sigma_{l}(D_x)$ is a bounded operator on $L^{\infty}(G)$ with operator norm independent on $l.$ In fact,  $\sigma_{l}$ satisfies the symbol inequalities
$$\Vert \mathbb{D}_{\xi}^{\alpha}\sigma_l(\xi)\Vert_{op}\leq C_{\alpha}\langle \xi \rangle^{-\frac{n}{2}(1-\rho)-\varepsilon|\alpha|},$$ for all $|\alpha| \leq k,$ and consequently
\begin{equation}
\Vert \sigma_{l}(D_x)\Vert_{\mathscr{B}(L^{\infty})}\leq C \sup\{C_{\alpha}: {|\alpha|\leq k}\}.\end{equation} So, we have
\begin{align*}
\Vert \psi_l(\mathcal{R})\sigma(D_x) & f \Vert_{L^\infty(G)} \\
&=\Vert   \sigma_l(D_x) \psi_l(\mathcal{R})f \Vert_{L^\infty(G)}\leq \Vert \sigma_{l}(D_x)\Vert_{\mathscr{B}(L^{\infty})} \Vert    \psi_l(\mathcal{R})f \Vert_{L^\infty(G)} \\
& \lesssim \sup\{C_{\alpha}: {|\alpha|\leq k}\}\Vert    \psi_l(\mathcal{R})f \Vert_{L^\infty(G)}.
\end{align*}
As a consequence, we obtain
\begin{align*}\Vert \sigma(D_x) f \Vert_{B^s_{\infty,\infty}(G)} &= \sup_{l\geq 0}2^{ls}\Vert \psi_l(\mathcal{R})\sigma(D_x)f \Vert_{L^\infty(G)}\\
& \lesssim \sup\{C_{\alpha}: {|\alpha|\leq k}\}\sup_{l\geq 0} 2^{ls} \Vert    \psi_l(\mathcal{R})f \Vert_{L^\infty(G)}\\
&\asymp \sup\{C_{\alpha}: {|\alpha|\leq k}\}\Vert f\Vert_{B^s_{\infty,\infty }(G)}. 
\end{align*}
\noindent{{\textit{Step 2.}}} Now, we extend the estimate  from multipliers to pseudo-differential operators. So,  let us define for every $z\in G,$ the multiplier
\begin{center}
$ \sigma_z(D_{x})f(x)=\sum_{[\xi]\in \widehat{G}}d_{\xi}\text{Tr}[\xi(x)\sigma(z,\xi)\widehat{f}(\xi)]. $
\end{center}
For every $x\in G$ we have the equality,
\begin{center}
$ \sigma_{x}(D_{x})f(x)=Af(x),$ 
\end{center} 
and we can estimate the Besov norm of the function $\sigma(x,D_x)f,$ as follows
\begin{align*}
\Vert \sigma_{x}(D_x)f(x)\Vert_{B^s_{\infty,\infty }} &\asymp \sup_{l\geq 0}2^{ls} \textnormal{ess sup}_{x\in G}|\psi_l(\mathcal{R})\sigma(x,D_x)f(x) |\\
&= \sup_{l\geq 0}2^{ls} \textnormal{ess sup}_{x\in G}|\sum_{[\xi]\in \widehat{G}}d_{\xi}\text{Tr}[\xi(x)\psi_l(\xi)\sigma(x,\xi)\widehat{f}(\xi)] |\\
&\leq \sup_{l\geq 0}2^{ls} \textnormal{ess sup}_{x\in G}\sup_{z\in {G}}|\sum_{[\xi]\in \widehat{G}}d_{\xi}\text{Tr}[\xi(x)\psi_l(\xi)\sigma(z,\xi)\widehat{f}(\xi)] |\\
&=  \sup_{l\geq 0}2^{ls} \textnormal{ess sup}_{x\in G}   \sup_{z\in G}|\psi_l(\mathcal{R})\sigma_z(D_x)f(x) | \\
&=  \sup_{l\geq 0}2^{ls} \textnormal{ess sup}_{x\in G}   \sup_{z\in G}| \sigma_z(D_x)  \psi_l(\mathcal{R})f(x) | \\
&\leq \sup_{l\geq 0}2^{ls}  \textnormal{ess sup}_{x\in G}   \sup_{z\in G} \textnormal{ess sup}_{\varkappa\in G} |\sigma_z(D_\varkappa)  \psi_l(\mathcal{R})f(\varkappa) | \\
&= \sup_{l\geq 0}2^{ls}   \sup_{z\in G} \Vert \sigma_z(D_\varkappa)  \psi_l(\mathcal{R})f(\varkappa) \Vert_{L^\infty(G)} .\\
\end{align*} From the estimate for the operator norm of multipliers proved in the first step, we deduce
\begin{align*}\sup_{z\in G}\Vert \sigma_{z}(D_x) \psi_l(\mathcal{R}) f \Vert_{L^\infty(G)}\lesssim  \sup\{C_{\alpha}: {|\alpha|\leq k}\}\Vert \psi_l(\mathcal{R})f\Vert_{L^\infty(G)}. 
\end{align*} So, we have
\begin{equation}
\Vert \sigma_{x}(D_x)f(x)\Vert_{B^s_{\infty,\infty }} \lesssim \sup\{C_{\alpha}: {|\alpha|\leq k}\}\Vert f\Vert_{B^s_{\infty,\infty }
(G)}. 
\end{equation} Thus, we finish the proof.
\end{proof} Now, we present the following result for symbols admitting differentiability in the spatial variables.
\begin{corollary}\label{MainT2} Let $G$ be a compact Lie group of dimension $n$. Let $0<\rho\leq 1,$ $0\leq \delta\leq 1,$ $\ell\in\mathbb{N},$ $k:=[\frac{n}{2}]+1,$  and let $A:C^{\infty}(G)\rightarrow C^{\infty}(G)$ be a pseudo-differential operator with symbol $\sigma$ satisfying  
\begin{equation}
\Vert \partial_x^\beta\mathbb{D}_{\xi}^{\alpha}\sigma(x,\xi)\Vert_{op}\leq C_{\alpha}\langle \xi \rangle^{-m-\rho|\alpha|+\delta|\beta|}
\end{equation}
 for all $|\alpha| \leq k,$ $|\beta|\leq \ell.$ Then  $A:B^s_{\infty,\infty}(G)\rightarrow B^s_{\infty,\infty}(G)$ extends to a bounded linear operator for all $-\infty<s<\infty$ provided that $m\geq \delta \ell+\frac{n}{2}(1-\rho).$
\end{corollary}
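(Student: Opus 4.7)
The plan is to derive Corollary \ref{MainT2} as an essentially immediate consequence of Theorem \ref{MainT}, by observing that the symbol hypothesis in the corollary, when restricted to $\beta=0$, already implies the symbol hypothesis of Theorem \ref{MainT} under the numerical constraint $m\geq \delta\ell+\tfrac{n}{2}(1-\rho)$. No new analytic machinery beyond what was developed for Theorem \ref{MainT} is required; what is being proved is really a uniform-in-$x$ consistency check, together with a comparison of the exponents in the two symbol inequalities.

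First I would specialize the hypothesis of the corollary to the case $\beta=0$. This produces, for every $|\alpha|\leq k$ and uniformly in $x\in G$, the bound
\begin{equation*}
\Vert \mathbb{D}_{\xi}^{\alpha}\sigma(x,\xi)\Vert_{op}\leq C_{\alpha}\langle \xi\rangle^{-m-\rho|\alpha|}.
\end{equation*}
Next, I would use the numerical assumption $m\geq \delta\ell+\tfrac{n}{2}(1-\rho)$. Since $\delta\geq 0$ and $\ell\in\mathbb{N}$, this automatically yields $m\geq \tfrac{n}{2}(1-\rho)$, so that $\langle \xi\rangle^{-m}\leq \langle\xi\rangle^{-\frac{n}{2}(1-\rho)}$ on $\widehat{G}$. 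Combining these two observations, I obtain
\begin{equation*}
\Vert \mathbb{D}_{\xi}^{\alpha}\sigma(x,\xi)\Vert_{op}\leq C_{\alpha}\langle \xi\rangle^{-\frac{n}{2}(1-\rho)-\rho|\alpha|}, \qquad |\alpha|\leq k,
\end{equation*}
which is precisely the hypothesis of Theorem \ref{MainT} with the same constants $C_{\alpha}$.

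Finally, I would invoke Theorem \ref{MainT} to conclude that $A$ extends to a bounded operator $B^{s}_{\infty,\infty}(G)\to B^{s}_{\infty,\infty}(G)$ for every $s\in\mathbb{R}$, with operator norm controlled by $C\sup\{C_\alpha:|\alpha|\leq k\}$. In this way the $x$-derivative information in the corollary's hypothesis is not even needed for the boundedness itself; it is only through the required lower bound on $m$ that the parameters $\delta$ and $\ell$ enter. The ``obstacle'' here is really only a cosmetic one, namely verifying that the Hörmander-type symbol condition with both $\partial_x^{\beta}$ and $\mathbb{D}_\xi^{\alpha}$ reduces, at $\beta=0$, to the hypothesis of Theorem \ref{MainT}, and that the requirement $m\geq \delta\ell+\tfrac{n}{2}(1-\rho)$ is sharp enough to absorb the critical index $\tfrac{n}{2}(1-\rho)$ appearing there. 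Since Step~2 of the proof of Theorem \ref{MainT} already handles arbitrary $x$-dependence of $\sigma$ uniformly by freezing $x$ and taking suprema, no further reduction (such as a composition with Bessel potentials $\Lambda_{s}$) is needed.
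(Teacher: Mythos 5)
Your proposal is correct and follows essentially the same route as the paper: both reduce the corollary to Theorem \ref{MainT} by observing that the hypothesis (at $\beta=0$) together with $m\geq \delta\ell+\tfrac{n}{2}(1-\rho)\geq \tfrac{n}{2}(1-\rho)$ and $\langle\xi\rangle\geq 1$ yields the symbol estimate $\Vert \mathbb{D}_{\xi}^{\alpha}\sigma(x,\xi)\Vert_{op}\leq C_{\alpha}\langle\xi\rangle^{-\frac{n}{2}(1-\rho)-\rho|\alpha|}$. If anything, your version is slightly cleaner, since specializing to $\beta=0$ avoids even having to bound $\delta|\beta|$ by $\delta\ell$, and it also corrects a sign typo in the paper's displayed comparison of exponents.
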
 \begin{proof}
Let us  observe that 
\begin{equation}
\langle \xi \rangle^{-m-\rho|\alpha|+\delta|\beta|}\leq \langle \xi \rangle^{-\frac{n}{2}(1-\rho)+\rho|\alpha|}
\end{equation} when $m\geq \delta \ell+\frac{n}{2}(1-\rho).$ So, we finish the proof if we apply Theorem \ref{MainT}.
\end{proof}

\noindent \textbf{Acknowledgments:}  This project was partially supported by the  Department of Mathematics of the  Pontificia Universidad Javeriana,  Bogot\'a-Colombia.

\bibliographystyle{amsplain}

\end{document}